\definecolor{darkgrey}{rgb}{0.4,0.4,0.5}
\newtheorem{thm}{Theorem}[section]
\newtheorem{lemma}[thm]{Lemma}
\newtheorem{conjecture}[thm]{Conjecture}
\theoremstyle{definition}
\newtheorem{proposition-definition}[thm]{Proposition-Definition}
\newtheorem{theorem}[thm]{Theorem}
\newtheorem{proposition}[thm]{Proposition}
\newtheorem{problem}[thm]{Problem}
\theoremstyle{remark}
\newtheorem{algorithm}[thm]{Algorithm}
\numberwithin{equation}{section}
\def\Kappa{\mathcal{K}}
\newcommand{\GC}{\mathrm{GC}}
\newcommand{\rank}{\mathrm{rank}}
\newcommand{\GL}{\mathrm{GL}}
\newcommand{\Gal}{\mathrm{Gal}}
\newcommand{\Prob}{\mathrm{Prob}}
\newcommand{\val}{\mathrm{val}}
\newcommand{\lcm}{\mathrm{lcm}}
\newcommand{\Tr}{\mathrm{Tr}}
\newcommand{\id}{\mathrm{id}}
\newcommand{\Norm}{\mathrm{Norm}}
\newcommand{\Q}{\mathbb Q}
\newcommand{\Z}{\mathbb Z}
\newcommand{\N}{\mathbb N}
\newcommand{\F}{\mathbb F}
\newcommand{\R}{\mathbb R}
\newcommand{\OO}{\mathcal O}
\newcommand{\gp}{\mathfrak p}
\newcommand{\gl}{\mathfrak l}
\newcommand{\ga}{\mathfrak a}
\begin{document}

\title[Greenberg's $p$-rationality conjecture]{Numerical verification of the Cohen-Lenstra-Martinet heuristics \\and of Greenberg's $p$-rationality conjecture}

\author{Razvan Barbulescu}
\address{Razvan Barbulescu\\
UMR 5251, CNRS, INP, Université de Bordeaux, \\
351, cours de la Libération, 33400, Talence, France}
\email{razvan.barbulescu@u-bordeaux.fr}

\author{Jishnu Ray}
\address{Jishnu Ray\\
Department of Mathematics, The University of British Columbia\\
Room 121, 1984 Mathematics Road, V6T 1Z2, Vancouver, BC, Canada }
\email{jishnuray1992@gmail.com}

\subjclass[2010]{11R29, 11Y40}

\keywords{class number, Cohen-Lenstra heuristic, $p$-rational number fields, $p$-adic regulator}
\thanks{}

\maketitle

\begin{resume}
\begin{small}
Dans cet article nous apportons des éléments en faveur de la conjecture de Greenberg d'existence de corps p-rationnels à groupe de Galois connu. Nous intruisons une famille de corps biquadratiques $p$-rationnels et nous donnons des nouveaux exemples numériques de corps $p$-rationnels multiquadratiques de grand degré. Dans le cas des corps multiquadratiques et multicubiques on prouve que la conjecture est une conséquence de la conjonction de l'heuristique de Cohen-Lenstra-Martinet et d'une conjecture de Hofmann et Zhang portant sur le régulateur $p$-adique; nous apportons des nouveauz résultats numériques en faveur de ces conjectures. Une comparaison des outils existants nous amène à proposer des modifiquations algorithmiques.  \end{small}
\end{resume}

\begin{abstract}
In this paper we make a series of  numerical experiments to support  Greenberg's $p$-rationality conjecture, we present a family of $p$-rational biquadratic fields and we find new examples of $p$-rational multiquadratic fields. In the case of multiquadratic and multicubic fields we show that the conjecture is a consequence of the Cohen-Lenstra-Martinet heuristic and of the conjecture of Hofmann and Zhang on the $p$-adic regulator, and we bring new numerical data to support the extensions of these conjectures. We compare the known algorithmic tools and propose some improvements.
\end{abstract}

\section{Introduction}

Let $K$ be a  number field,  $S_p$ the set of prime ideals of $K$ above $p$, $K_{S_p}$ the compositum of all finite $p$-extensions of $K$ which
are unramified outside $S_p$. We call $\mathcal{T}_p$ the torsion subgroup of the abelianization of $\Gal(K_{S_p}/K)$. The study of $\mathcal{T}_p$ is a major question in Iwasawa theory. If $K$ satisfies Leopoldt's conjecture at $p$ and $\mathcal{T}_p\simeq 0$ we say that $K$ is $p$-rational.  A. Movahhedi and T. Nguyen Quang Do, 
 in \cite{Moh1},  discussed this notion of $p$-rational fields and showed that if $K$ is $p$-rational, then  $\Gal(K_{S_p}/K)$ is a free pro-$p$ group (see also the PhD thesis of Movahhedi \cite[Chapter II]{MovThesis}).    Movahhedi also proved an equivalent characterization of $p$-rational fields depending on the class number of $K$ and the unit groups of its $p$-adic completions (cf. \textit{ibid}).  We recommend Gras' book~\cite{Grasbook} for a presentation of numerous results in the topic of $p$-rational fields.

The existence of $p$-rational fields  allows us to obtain algorithms and proofs. For example, Schirokauer~\cite{Sch93} proposed an algorithm to compute discrete logarithms in the field of $p$ elements which uses a $p$-rational number field. A more recent application  due to Greenberg~\cite[Prop 6.7]{Green} is the following. If there exists a totally complex $p$-rational number field $K$ such that $\Gal(K/\Q)\simeq (\Z/2\Z)^t$ for some $t$, then for all integers $n$ such that $4\leq n\leq 2^{t-1}-3$ there exists an explicit continuous representation with open image
\begin{equation*}
	\rho_{n,p} : \Gal(\overline{\Q}/ \Q)\rightarrow \GL(n,\Z_p).
 \end{equation*}  
Note that for $n=2$ one has such a construction using elliptic curves. Yet, the only other results before Greenberg's construction correspond to $n=3$ (due to Hamblen and Upton according to~\cite{Green}). 

Greenberg conjectured that for any pair $(p,t)$ there exists a $p$-rational number field $K$ such that $\Gal(K/\Q)\simeq (\Z/2\Z)^t$. Let us consider the generalization of this conjecture to any abelian field. 
\begin{problem}\label{problem:GC}
	Given a finite abelian group $G$ and a prime $p$, decide if the following statements hold: 
 there exists one (resp. infinitely many) $p$-rational number field(s) of Galois group $G$; in this case we say that Greenberg's conjecture (resp. the infinite version of Greenberg's conjecture) holds for $G$ and $p$ or simply that $\GC(G,p)$ (resp. $\GC_\infty(G,p)$) holds.
\end{problem}

The scope of this article is to investigate this problem. In Section~\ref{sec:examples}, we propose a family of $p$-rational biquadratic fields and prove $\GC((\Z/2\Z)^t,p)$ for all primes $p\in [5,97]$ and $t\in [7,11]$ depending on~$p$. In Section~\ref{sec:density} we prove that $\GC_\infty((\Z/q\Z)^t,p)$ for $q=2$ and $3$ and for any $t\geq 1$ and $p\geq 5$ are consequences of the Cohen-Lenstra-Martinet heuristic and of a recent conjecture of Hofmann and Zhang. Finally, in Section~\ref{sec:algorithmic} we present a comparison and modifications of the algorithms used to obtain the experimental data. 

\section*{Acknowledgments} We are very grateful to Ralph Greenberg  who encouraged us to do this study.  We also thank the referees for very careful reading of our manuscript and for correcting several errors and inaccuracies in the previous versions of this paper. 

\section{Some examples of $p$-rational fields for Greenberg's conjecture}\label{sec:examples}

Let $n_K$, $h_K$, $D_K$ and $E_K$ be the degree, the class number, the discriminant and the unit group of~$K$. If $K$ is abelian, we denote its conductor by~$c_K$.

The first objective of this article is to present an infinite family of $p$-rational fields and to find examples of multiquadratic $p$-rational fields that are larger than the results in~\cite{Green}. For this we use a characterization of $p$-rational fields as follows.

\begin{proposition}[Prop II.1 of \cite{MovThesis}]\label{def:p-rational} We use the notations given above and call $(r_1,r_2)$ the signature of $K$. The following statements are equivalent:
	\begin{enumerate}
		\item $K$ is $p$-rational (i.e. $K$ satisfies Leopoldt's conjecture at $p$ and $\mathcal{T}_p$ is trivial)  or equivalently $\Gal(K_{S_p}/K)^\text{ab}\simeq \Z_p^{1+r_2}$;
        \item $\Gal(K_{S_p}/K)$ is a free pro-$p$ group with  $r_2+1$ generators;
        \item $\Gal(K_{S_p}/K)$ is a free pro-$p$ group.
	\end{enumerate}
	By \cite{Moh2}, the above conditions on $p$-rationality  are also equivalent to
	\begin{enumerate}
		\setcounter{enumi}{3}
		\item 
		\begin{enumerate}
			\item $\left\{\alpha \in K^{\times} \mid
			\begin{array}{l}
			\alpha
			\mathcal{O}_K=\mathfrak{a}^p\text{ for some fractional ideal }\ga\\
			\text{ and
			}\alpha \in
			(K_{\gp}^{\times})^p\text{  for all } \gp \in S_p
			\end{array}
			\right\}=(K^{\times})^p $,
			\item  and
			the map $\mu(K)_p \rightarrow \prod_{\gp \in
				S_p}\mu(K_{\gp})_p$ is an isomorphism,
		\end{enumerate}
        where $K_\mathfrak{p}$ is the completion of $K$ at a prime ideal $\mathfrak{p}\in S_p$ and $\mu(K)_p$ is the set of $p$-th roots of unity in $K$.
	\end{enumerate}
\end{proposition}
We  note that the condition (4.b) is automatically satisfied for primes  $p>+1$ as $[\Q_p(\zeta_p):\Q_p]=p-1$ and therefore no $p$-adic completion of $K$ can contain $p$-th roots of unity.   

We call $p$-primary any unit of $K$ which is not a $p$-th power in $K$ but it's a $p$-th power in all the $p$-adic completions of $K$. Assume that $K$ satisfies Leopoldt's conjecture (e.g. $\Gal(K/\Q)$ is abelian) and that $p$ is such that the map $\mu(K)_p \rightarrow \prod_{\gp \in S_p}\mu(K_{\gp})_p$ is an isomorphism (e.g.  $p>n_K+1$). Then we have a simple criterion for $p$-rationality: if $p\nmid h_K$ and $K$ has no $p$-primary units then $K$ is $p$-rational. 
	
In particular, for all primes $p\geq 5$, all imaginary quadratic fields $K$ such that $p\nmid h_K$ are $p$-rational. Hence $\GC_\infty(\Z/2\Z,p)$ is a consequence of a result due to Hartung.

\begin{proposition}[\cite{Hartung}]
For all odd primes $p$ there exist infinitely many square-free integers $D<0$
such that $h_{\Q(\sqrt{D})}\cdot D\not\equiv 0\pmod p$. Therefore, there exist infinitely many $p$-rational imaginary quadratic fields. As a consequence,  $\GC_\infty(\Z/2\Z,p)$ holds. 
\end{proposition}

The existence of $p$-primary units is easily tested using $p$-adic logarithms (\cite{Washington} Sec. 5.1). Let $K$ be a number field (not necessarily abelian) and $p$ a prime which is unramified in~$K$ and such that $K$ has no $p$-th roots of unity. In the following $\OO_p=\mathbb{Z}_p \otimes \mathcal{O}_K$  and we set $e_p:=\lcm(\{\Norm(\gp)-1\text{ : }\gp\in S_p\})$ and $K_p:=\{x\in K^*\mid \forall \gp\in S_p, \val_\gp(x)=0\}$. 
Since $x\mapsto x^{e_p}$ injects $K_p$ into $\{z\in \mathbb{C}_p^* : \forall \gp\in S_p, \val_\gp(z-1) \geq 1 \}$, we can extend $\log_p$ to $K_p$ by $\log_p(x):=\frac{1}{e_p}\log_p (x^{e_p})$. 

 Note that an element of $\OO_K$ is a $p$-th power in $K_\gp$ for all $\gp\in S_p$ if and only if $\log_p(x)\in p^2\OO_p$. Hence, a unit $\varepsilon\in K\backslash K^p$ is $p$-primary if and only if $\log_p(\varepsilon)\equiv 0\pmod{p^2\OO_p}$. 

Assume that $K$ is totally real. If $U$ is a set of $n_K-1$ units, we denote by $R_p(U)$ the $p$-adic regulator (\cite{Washington} Sec 5.5 and \cite{HofZha16} Sec. 2.1); if $U$ is a system of fundamental units we simply write $R_{K,p}$. We call normalized $p$-adic regulator the quotient $R'_{K,p}:=R_{K,p}/p^{n_K-1}$ and note that if $K$ has $p$-primary units then $R'_{K,p}\in p\OO_p$.

If on the contrary, $p$ is ramified at $K$ we don't have necessarily that $R_p'\in \OO_p$ and we don't have an equivalence for existence of $p$-primary units. However, when $p$-primary units are present it remains true that $\val_p(R'_p)\geq 1$. In the sequel we write ``$p\mid R_p'$'' for ``$\val_p(R_p')\geq 1$'' in the both cases when $p$ is ramified and unramified in~$K$.

In the case of multiquadratic fields we shall need a result of Greenberg:
\begin{lemma}(\cite[Prop 3.6]{Green}) \label{lem:prational_subfield}
Let $K$ be an abelian extension of $\Q$ and $p$ is a prime which does not divide the degree $[K:\Q]$. Then $K$ is $p$-rational if and only if all its  cyclic subfields are $p$-rational.
\end{lemma}

 A study of the $p$-adic logarithm of the fundamental unit allows to construct a $p$-rational biquadratic number field for a fixed prime $p$, i.e. to show that $\GC(\Z/2\Z\times \Z/2\Z,p)$ holds for any~$p$. 
\begin{theorem}{
		For any prime $p$, the field $K=\Q(i\sqrt{p-1},i\sqrt{p+1})$ is
		$p$-rational.} 
\end{theorem}
\begin{proof}
	Let us call $k_1=\Q(\sqrt{p^2-1})$, $k_2=\Q(i\sqrt{p-1})$ and $k_3=\Q(i\sqrt{p+1})$ the three quadratic subfields of~$K$. We treat first the case where $p\geq 5$ using the $p$-rationality criterion presented above: we show that $p\nmid h_{k_1}$ (step 1) and that the fundamental unit of $k_1$ is not $p$-primary (step 2), so $\Q(\sqrt{p^2-1})$ is $p$-rational. Then we show that $\max(h_{k_2},h_{k_3})<p$ (step 3), which shows that $k_2$ and $k_3$ are $p$-rational. This completes the proof for $p\geq 5$ using Lemma~\ref{lem:prational_subfield}. The cases $p=2$ and $p=3$ are treated at the end (step 4). 
		
		\textit{Proof of step 1.	}
		We distinguish two cases depending whether $p$ is of the form $\frac{1}{2}a^2\pm 1$ for some $a\in\Z$.  

\underline{The case when $p\neq\frac{1}{2}a^2\pm 1$ for any $a\in\Z$.} 

Let us show that $\varepsilon=p+\sqrt{p^2-1}$ is a
		fundamental unit. 
		Note first that $D_{k_1}=4Q$ or $Q$ where $Q$ is the square free part of $(p^2-1)/4$, so $D_{k_1}$ is a positive divisor of $p^2-1$.
		Also note that the minimal polynomial of $\varepsilon$ is $\mu_\varepsilon=x^2-2px+1$. If $\varepsilon$ is a square in $k_1$ then $x^4-2px^2+1$ is divisible in $\Q[x]$ by a polynomial of the form $\mu_{\sqrt{\varepsilon}}=x^2-2ax\pm 1$ with $a\in\Z$, which is forbidden by the assumption that $p$ is not of the form $\frac{1}{2}a^2\pm1$. 
        As a real field, $k_1$ has no roots of unity other than $\pm1$ so there exists an odd integer $n$ such that $\varepsilon=\varepsilon_0^n$ where $\varepsilon_0$ is the fundamental unit greater than $1$. Note that  $\gamma:=-(\varepsilon_0^n+\varepsilon_0^{-n})/(\varepsilon_0+\varepsilon_0^{-1})$ belongs to $\Z[\varepsilon_0]$ and therefore to~$\OO_{k_1}$. Since  $\gamma=\Tr(\varepsilon)/\Tr(\varepsilon_0)$ ($\Tr$ denotes the trace), it belongs to $\Q$ and therefore $\gamma$ is an integer, so $\Tr(\varepsilon_0)\in\{\pm 2p,\pm p,\pm1, {\pm2} \}$. Hence the minimal polynomial of $\varepsilon_0$ is equal to $x^2\pm 2p x\pm 1$, $x^2\pm px\pm 1$, $x^2\pm 2x\pm 1$ or $x^2 \pm x \pm 1$. We rule out $x^2\pm2x+1$ because they are not irreducible and we rule out the cases $x^2\pm 2x-1$ and $x^2 \pm x \pm 1$ because their roots belong to $\Q(\sqrt{2})$, $\Q(\sqrt{3})$ or $\Q(\sqrt{5})$, which can only belong to $k_1$ if $p^2-1=\frac{1}{2}a^2$, $p^2-1=3a^2$ or $p^2-1=5a^2$. The first  case is forbidden by our assumption on $p$ and the other two are forbidden because the systems of equations $\{$ $p\pm1=3b^2$ and $p\mp1= c^2$ $\}$ and $\{$ $p\pm1=5b^2$ and $p\mp 1=c^2$ $\}$ have no solutions modulo $4$ for odd $p$ and hence no solutions in $\Z$. Among the remaining polynomials, the only irreducible polynomial whose discriminant divides $p^2-1$ (and satisfying $\varepsilon=\varepsilon_0^n$) is $\mu_\varepsilon$, so $\varepsilon_0=\varepsilon$ (i.e. $n=1$) or equivalently $p+\sqrt{p^2-1}$ is a fundamental unit. 
	
	By a result of Louboutin~\cite[Theorem 1]{Lou1998} we have the following effective bound 
	$$h_{k_1}\leq \sqrt{D_{k_1}}\frac{e\log (D_{k_1})}{4\log\varepsilon}.$$ Since $D_{k_1}\leq p^2-1$, we conclude that $h_{k_1}<p$ and hence $p\nmid h_{k_1}$.

     \underline{The case when $p=\frac{1}{2}a^2\pm 1$ with $a\in\Z$.} 
     
     Let $d$ be the square free part of $p^2-1$ and $\varepsilon:=a+b\omega$ with $a,b\in\Z$ be a fundamental unit of $\Q(\sqrt{p^2-1})$, where $\omega=\sqrt{d}$ or $\frac{1+\sqrt{d}}{2}$ depending on the residue of $d$ mod~$4$. Without loss of generality we can assume that $a>0$ and $|\varepsilon|>1$. Since the conjugate of $\varepsilon$, $a-b\omega $ is also a fundamental unit we have $|a+b\omega|>|a-b\omega|$, so $b\geq 1$. Hence we have $$\varepsilon\geq1+1\cdot \omega\geq  1+\min(\sqrt{d},\frac{1+\sqrt{d}}{2})\geq 1+\sqrt{3}.$$

Note as before that $D_{k_1}=4Q$ or $Q$ where $Q$ is the free part of $p^2-1$. Since $p=\frac{1}{2}a^2\pm 1$, $Q$ is a divisor of $(p\pm 1)/2$, so $D_{k_1}\leq 2(p+1)$. 
We apply Louboutin's bound once again and obtain
$$h_{k_1}\leq \sqrt{2(p+1)}\frac{e\log(\sqrt{2(p+1)})}{4\log \varepsilon} <p,$$
because $p\geq 7$, so $p\nmid h_{k_1}$.  
	
	\textit{Proof of step 2.}
 To test if $\varepsilon$ is $p$-primary we test if $\varepsilon^{p^2-1}-1\equiv 0\pmod{p^2\OO_p}$. Indeed, $\log_p(\varepsilon)=\frac{1}{p^2-1}\log_p(\varepsilon^{p^2-1})\equiv 1 -\varepsilon^{p^2-1}\pmod {p^2\OO_p}$.  Then modulo   $p^2 \Z[\sqrt{p^2-1}$ we have

 $$
	\begin{array}{ccll}
\varepsilon^{p^2-1}-1&\equiv&(p^2-1)^\frac{p^2-1}{2}-1+p(p^2-1)^\frac{p^2-3}{2}\sqrt{p^2-1}&({p^2\Z[\sqrt{p^2-1}]})\\
		&\equiv&\pm p\sqrt{p^2-1}& ( p^2 \Z[\sqrt{p^2-1}]).
\end{array}
$$
	Since $p^2\Z[\sqrt{p^2-1}]\subset p^2\OO_{k_1}\subset p^2\OO_p$, this shows
		that the $p$-adic logarithm of $\varepsilon$ is not a multiple of $p^2$, so
		$\varepsilon$ is not $p$-primary.

	\textit{Proof of step 3.} Recall Hua's bound  for the class numbers of imaginary quadratic fields $k$ (Remark 4. in~\cite{Louboutin-imaginary}, where a sharper inegality is proven in Theorem 3.):
$$h_k\leq \frac{\sqrt{D_k}}{\omega_k}(\log(\sqrt{D_k})+1),$$
   where $\omega_k$ is the number of roots of unity in~$k$; note that $\omega_k\geq 2$. Since for $i=2,3$, $D_{k_i}\leq 4(p+1)$ and for all $x\geq 5$ it holds $\frac{1}{2}\sqrt{4(x+1)}(\log(\sqrt{4x+1})+1)< x$, we conclude that $\max(h_{k_2},h_{k_3})<p$. 

Finally, note that $k_2$ and $k_3$ are imaginary, so they have no $p$-primary units. In the case where $p\geq 5$, the $p$-rationality criterion above applies and we conclude that $k_2$ and $k_3$ are $p$-rational. Hence, all the quadratic subfields of $K$ are $p$-rational and so does~$K$.

         \textit{Proof of step 4.} Let us consider the case of $p=2$ and $p=3$. Note that $S_2$ associated to $\Q(\sqrt{3})$ corresponds to a singleton $K_\gp$ and $\mu(\Q(\sqrt{3}))_2=\{\pm 1\}=\mu(\Q(\sqrt{3})_2)_2$. Also note that $S_3$ associated to $\Q(\sqrt{2})$ is an inert ideal and for the corresponding completion $K_p$ has no $3$-rd roots of unity. In both cases, the condition (4.b) in~Proposition~\ref{def:p-rational} is satisfied and one can apply the $p$-rationality criterion. The unit $\varepsilon:=p-\sqrt{p^2-1}$ is not a $p$-th power locally, by the same argument as above. Since $\varepsilon$ is a power of the fundamental unit, $\Q(\sqrt{p^2-1})$ has no $p$-primary units. Since the class number of $\Q(\sqrt{2})$ and $\Q(\sqrt{3})$ is $1$ we obtain that $k_1$ is $p$-rational. 
         
    As the class numbers of $\Q(\sqrt{-1})$, $\Q(\sqrt{-2})$ and $\Q(\sqrt{-3})$ is $1$, the Example (c) of~\cite[Ch. II]{MovThesis} applies : assuming the class number is not divisible by $p$, the imaginary field $\Q(\sqrt{-d})$ is $2$-rational if and only if $d\not \equiv 7\pmod 8$ and it is $3$-rational if and only if $d=3$ or $d\not \equiv 3\pmod 9$. Indeed, for $p=2$ the squarefree parts of $p-1$ and $p+1$ are $1$ and $3$ which are not congrent to $7$~mod~$8$. For $p=3$ the squarefree parts of $p-1$ and $p+1$ are $2$ and $1$ which are not congruent to $3$ modulo~$9$. Hence, $k_2$ and $k_3$ are $p$-rational and we conclude that $K$ is $p$-rational. 

\end{proof}

\subsection{Some numerical examples of $p$-rational multiquadratic fields}\label{ex:quadratic}

In Table~\ref{tab:composita} we  give examples of complex $p$-rational fields $K$ of Galois group $G=(\Z/2\Z)^t$ for all primes $p\in[5,97]$ and greater values of $t$ than those found by Greenberg and Pollack~\cite[Sec 4.2]{Green}. We emphasize the fact that every example proves the existence of open continuous representations of $\Gal(\overline{\Q}/\Q)$ in $\GL(n,\Z_p)$ by including the values of $n$ corresponding to each field (cf Prop 6.7 in~\cite{Green}). 

The fields $\Q(\sqrt{d_1},\ldots,\sqrt{d_t})$ in the examples were found by searching minimal values of $d_i$ for each value of $i$: we took $d_1$ equal to the smallest positive non-square non divisible by $p$ such that $p\nmid h_{\Q(\sqrt{d_1})}$ and its  fundamental unit is not $p$-primary. For $i=2,3\ldots,t-1$ we computed the smallest $d_i \geq d_{i-1}+1$ relatively prime to $p\prod_{j=1}^{i-1}d_j$ such that all the $2^i-1$ quadratic subfields of $\Q(\sqrt{d_1},\ldots,\sqrt{d_i})$ have class numbers non divisible by $p$ and fundamental units which are not $p$-primary. Finally, $d_t<0$ is the negative integer of smallest absolute value such that $\gcd(d_t,p\prod_{i=1}^{t-1}d_i)=1$ and the $2^{t-1}$ imaginary quadratic subfields of $\Q(\sqrt{d_1},\ldots,\sqrt{d_t})$ have class numbers non divisible by~$p$ (the corresponding scripts are available in the online complement~\cite[search-example.sage]{OnlineComplement}). 

\begin{table}
	\begin{tabular}{llll}
		$p$      & $t$ & $d_1,\ldots,d_t$& open image of $\Gal(\overline{\Q}/\Q)$ in\\
		\hline
		\hline
		5  & 7 &2,3,11,47,97,4691,-178290313 & $\forall n\in[4,61]$, $\GL(n,\Z_5)$\\
		7  & 7 &2,5,11,17,41,619,-816371, & $\forall n\in[4,61]$, $\GL(n,\Z_7)$ \\
		11 &8 &2,3,5,7,37,101,5501,-1193167 & $\forall n\in[4,125]$, $\GL(n,\Z_{11})$ \\
		13 &8 &3,5,7,11,19,73,1097,-85279 & $\forall n\in[4,125]$, $\GL(n,\Z_{13})$\\
		17 &8 &2,3,5,11,13,37,277,-203 & $\forall n\in[4,125]$, $\GL(n,\Z_{17})$ \\
		19 &9 &2,3,5,7,29,31,59,12461, -7663849 & $\forall n\in[4,253]$, $\GL(n,\Z_{19})$  \\
		23 &9 &2,3,5,11,13,19,59,2803,-194377 & $\forall n\in[4,253]$, $\GL(n,\Z_{23})$ \\
		29 &9 &2,3,5,7,13,17,59,293,-11 & $\forall n\in[4,253]$, $\GL(n,\Z_{29})$   \\
		31 &9 &3,5,7,11,13,17,53,326,-8137 & $\forall n\in[4,253]$, $\GL(n,\Z_{31})$ \\
		37 &9 &2,3,5,19,23,31,43,569,-523 & $\forall n\in[4,253]$, $\GL(n,\Z_{37})$ \\
		41 &9 &2,3,5,11,13,17,19,241,-1 & $\forall n\in[4,253]$, $\GL(n,\Z_{41})$ \\
		43 &10 &2,3,5,13,17,29,31,127,511,-2465249 & $\forall n\in[4,509]$, $\GL(n,\Z_{43})$   \\
		47 &10&2,3,5,7,11,13,17,113,349,-1777 & $\forall n\in[4,509]$, $\GL(n,\Z_{47})$ \\
		53 &10&2,3,5,7,11,13,17,73,181,-1213 & $\forall n\in[4,509]$, $\GL(n,\Z_{53})$    \\
		59 &10&2,3,5,11,13,17,31,257,1392,-185401 & $\forall n\in[4,509]$, $\GL(n,\Z_{59})$   \\
		61 &10&2,3,5,7,13,17,29,83,137, -24383 & $\forall n\in[4,509]$, $\GL(n,\Z_{61})$  \\
		67 &11&2,3,5,7,11,13,17,31,47,5011,-2131 & $\forall n\in[4,1023]$, $\GL(n,\Z_{67})$\\
		71 &10&2,3,5,11,13,17,19,59, 79,-943 & $\forall n\in[4,509]$, $\GL(n,\Z_{71})$\\
		73 &10&2,3,5,7,13,17,23,37,61,-1& $\forall n\in[4,509]$, $\GL(n,\Z_{73})$ \\
		79 &10&2,3,5,7,11,23,29,103,107,-1 & $\forall n\in[4,509]$, $\GL(n,\Z_{79})$   \\
		83 &10&2,3,5,7,11,13,17,43,97,-1 & $\forall n\in[4,509]$, $\GL(n,\Z_{83})$  \\
		89 &11&2,3,5,7,11,23,31,41,97,401,-425791 & $\forall n\in[4,1023]$, $\GL(n,\Z_{89})$ \\
		97 &11&2,3,5,7,11,13,19,23,43,73,-1 & $\forall n\in[4,1023]$, $\GL(n,\Z_{97})$  \\
		\hline
	\end{tabular}
	\caption{Examples of $p$-rational complex number fields of the form $K=\Q(\sqrt{d_1},\ldots,\sqrt{d_t})$ such that $\Gal(K)\simeq (\Z/2\Z)^t$ and their consequences on the existence of continuous representations of $\Gal(\overline{\Q}/\Q)$ with open image.}
	\label{tab:composita}
\end{table}

Note that the difference $d_i-d_{i-1}$ increases rapidly so that the cost of finding $p$-rational fields with larger $t$ increases in accordance.  This raises the question of the existence of a natural density of $p$-rational number fields having a given Galois group. 

\section{The Cohen-Lenstra-Martinet heuristic and the conjectured density of $p$-rational fields}\label{sec:density}

Cohen and Lenstra~\cite{Cohen1} and Cohen and Martinet~\cite{Martinet2} conjectured that there exists a natural density of number fields whose class number is divisible by a prime $p$ among the set of number fields of given Galois group and signature. We bring new numeric data in favor of the conjecture in Section~\ref{ssec:evidence CLM}. We recall and extend a conjecture of Hofmann and Zhang about the valuation of the $p$-adic regulator (Section~\ref{ssec:evidence regulator}) and then we prove that these conjectures imply Greenberg's $p$-rationality conjecture (Section~\ref{ssec:implications}).

\subsection{New numerical data to verify the Cohen-Lenstra-Martinet heuristics}\label{ssec:evidence CLM}
The Cohen-Lenstra-Martinet conjecture on cyclic cubic fields~\cite[Conjecture C14]{Cohen1}\cite[Sec. 2, Ex 2(b)]{Martinet2} was initially supported by the data computed on the $2536$ cyclic cubic fields of conductor less than $16000$, i.e., discriminant less than $2.56\cdot 10^6$, (cf.~\cite{Martinet2}). Malle~\cite{Malle} noted that the aforementioned data fit equally well the value and the double of the value predicted by the Cohen-Lenstra-Martinet conjectures. This ambiguity is solved if the computations are pushed up to larger conductors. 

We used PARI/GP (\textbf{http://pari.math.u-bordeaux.fr}) to test the Cohen-Lenstra-Martinet heuristic on the 1585249 cyclic cubic fields of conductor less than $10^7$, e.g., discriminant less than $10^{14}$. The results are summarized in Table~\ref{table:numeric CLM} and the complete data are available in the online complement~\cite[table4.txt.gz]{OnlineComplement}. The data in Table~\ref{table:numeric CLM} show that the relative error between the computed density and the one predicted by Conjecture~\ref{conj:reformulation Martinet} is between $0.2\%$ and $78.3\%$.

\renewcommand{\arraystretch}{1.8}
\begin{table}[h!]
	\begin{tabular}{|c|c|c|c|c|c|}
	    \hline
	    \multirow{2}{*}{$p$}          &
	    Conj. C-14 \cite{Cohen1}
			 & 	
			\multicolumn{2}{c|}{			$\Prob(p\mid h_k :  D_K\leq X^2)$} &
			\multicolumn{2}{c|}{           $\frac{\Prob(p\mid h_K, D_K\leq X^2)-\Prob(p\mid h_K) }{\Prob(p\mid h_K)}$   }\\
\cline{3-6}
	&  $\Prob(p \mid h_K)$  & $X=1.6\cdot 10^3$ & $X=10^7$ & $X=1.6\cdot 10^3$ & $X=10^7$\\
		\hline 
		$5$           &  $1.67\cdot 10^{-3}$    &$\frac{4}{2536}$ &  
		$\frac{3042}{1585249}\approx 1.91\cdot 10^{-3} $ &$-5.4\%$  & $15.1\%$   \\
		\hline
		$7$           &  $4.69\cdot 10^{-2}$     & $\frac{87}{2536}$   &  $\frac{72142}{1585249}\approx  4.55\cdot 10^{-2}$
		&$-26.9\%$& $3.0\%$  \\
		\hline
		$11$          &  $6.89\cdot 10^{-5}$  & $0$ & 
		$\frac{127}{1585249}\approx 8.01\cdot 10^{-5} $& $100\%$  & $16.3\%$   \\
		\hline
		$13$          &  $1.28\cdot 10^{-2}$    & $\frac{26}{2536}$ &  $\frac{20244}{1585249}\approx1.28\cdot 10^{-2}
		$ &$-29.7\%$ & $0.2\%$ \\
		\hline
		$17$          &  $1.20\cdot 10^{-5}$     & $0$ &  $\frac{23}{1585249}\approx1.45\cdot 10^{-5}
		$ &$100\%$ & $20.8\%$ \\
\hline
$19$          &  $5.84\cdot 10^{-3}$     & $\frac{16}{2536}$&   $\frac{9406}{1585249}\approx 9.41\cdot 10^{-3}$ & $8.1\%$ &$1.6\%$ \\
		\hline
		$23$          &  $3.58\cdot 10^{-6}$     & $0$ &
		$\frac{9}{1585249}\approx5.67\cdot 10^{-6}
		$ &  $100\%$ & $58.6\%$ \\
		\hline
		$29$          &  $1.41\cdot 10^{-6}  $     & $0$ &  $\frac{4}{1585249}\approx
		2.52\cdot 10^{-6}$ &$100\%$ & $78.3\%$ \\
		\hline
	\end{tabular}
	\caption{Comparison, for primes $p$ between $5$ and $29$, between the proportion of cyclic cubic fields of conductor less than $X=16000$ (resp. $10^7$) whose class number is divisible by $p$, denoted by $\Prob(p\mid h_K: D_K\leq X^2)$, and the density in~Conjecture C-14 in~\cite{Cohen1}. 
	}
	\label{table:numeric CLM}
\end{table}

If $\Gal(K)\simeq (\Z/q\Z)^t$ for some prime $q\neq p$, then Kuroda's formula~\cite[ Eq (17)]{Kuroda} states that $
	h_K=q^\alpha\prod_{k_i\text{ subfield of degree }q} h_{k_i}$ for some $\alpha\in \N$. In the Cohen-Lenstra-Martinet philosophy, the class numbers of the subfields in Kuroda's formula behave ``independently'', e.g. compare the values predicted for the Galois group $\Z/2\Z\times \Z/2\Z$ to the cube of that of $\Z/2\Z$ as well as the value for $\Z/3\Z\times\Z/3\Z$ to the $4$-th power of that for $\Z/3\Z$. This allows us to extend  Conjectures C-7 and C-14 in \cite{Cohen1} as follows. 

\begin{conjecture}\label{conj:reformulation Martinet} Set $(p)_{\infty}:=\prod_{k \geq 1}(1-p^{-k})$ and $(p)_1:=(1-p^{-1})$.  ~\\
	(1) If $K$ is a real field such that $\Gal(K/\Q)\simeq (\Z/2\Z)^t$  for some $t$ and $p$ is an odd prime, then  
		$$\Prob(p \nmid h_K)=\frac{(p)_\infty}{(p)_1}^{2^t-1}.$$

\noindent(2) If $\Gal(K/\Q)\simeq (\Z/3\Z)^t$ for some $t$ and $p\geq 5$ is a prime then
		$$\Prob(p \nmid h_K)=\left\{ 
		\begin{array}{ll}
		(\frac{(p)_\infty^2}{(p)_1^2})^{\frac{3^t-1}{2}}, &\text{if }p\equiv 1\pmod 3;\\ 
		\frac{(p^2)_\infty}{(p^2)_1}^{\frac{3^t-1}{2}}, &\text{if }p\equiv 2\pmod 3.
		\end{array}
		\right.$$
\end{conjecture}
In the case of Galois group $\Z/3\Z\times \Z/3\Z$ we support Conjecture~\ref{conj:reformulation Martinet} with numerical data which are summarized in Table~\ref{tab:numeric Z/3xZ/3} are available online at~\cite[table5.txt.gz]{OnlineComplement}.

\renewcommand{\arraystretch}{1.6}
\begin{table}[h!]
	\begin{tabular}{|c|c|cc|}
		\hline
		$p$          &\begin{tabular}{c}theoretic\\[-10pt]
			density\end{tabular} & 
		\begin{tabular}{c}
			stat. density\\[-10pt]
			conductor $\leq 10^6$
		\end{tabular}
		&
		\begin{tabular}{c}
			\text{relative}\\[-10pt]
			\text{error}
		\end{tabular}
		\\
		\hline 
		$5$           &  $0.00334$    & $\frac{933}{203559}\approx 0.0066458$ & $31\%$ \\
		\hline
		$7$           &  $0.17481$     & $\frac{23912}{203559}\approx 0.11746$ & $33\%$ \\
		\hline
		$11$          &  $0.00028$   & $\frac{26}{203559}\approx 0.00013$ & $54\%$ \\
		\hline
		$13$          &  $0.02316$     & $\frac{6432}{203559}\approx 0.03160$ & $36\%$ \\
		\hline
		$17$          &  $0.000048$  & $\frac{4}{203559}\approx0.0000197$ & $59\%$ \\
		\hline
		$19$          &  $0.02315$     & $\frac{3536}{203559}\approx0.01737$ &$25\%$ \\
		\hline
	\end{tabular}
	\caption{Statistics on the density of fields of Galois group
		$\Z/3\Z\times\Z/3\Z$ whose class
		number is divisible by $p$ for primes $p$ between $5$ and $19$.  
	}
	\label{tab:numeric Z/3xZ/3}
\end{table}

\subsection{Numerical verification of a conjecture on the $p$-adic regulator}\label{ssec:evidence regulator}

 In a heuristic, Schirokauer~\cite[p. 415]{Sch93} obtained that the density of number fields which contain $p$-primary units is $O(\frac{1}{p})$. The same heuristic implies that the density of fields such that $p$ divides $R'_{K,p}$ is also~$O(\frac{1}{p})$.  Hofmann and Zhang~\cite[Conj 1.1]{HofZha16} go beyond the $O(\frac{1}{p})$ upper bound and make a conjecture on the precise density of cyclic cubic fields such that $p\mid R'_{K,p}$: the density is $\frac{2}{p}-\frac{1}{p^2}$ if $p\equiv 1\pmod 3$ and $\frac{1}{p^2}$ if $p\equiv 2\pmod 3$. In the same philosophy, the normalized $p$-adic regulator of a real quadratic field $K$ is heuristically considered to be random element of $\Z_p$ and therefore the probability that $p$ divides $R'_{K,p}$ is $1/p$.   

If $K$ is a number field such that $\Gal(K)\simeq (\Z/q\Z)^t$ for a prime $q$ and an integer $t$ and if $p\neq q$ is a prime, then Kuroda~\cite[Eq. (18)]{Kuroda} showed that  $R_K=q^\beta\prod_{k_i\text{ subfield of degree }q} R_{k_i}$ for some $\beta\in \N$ where $R_K$ and respectively $R_{k_i}$ denote the regulator of $K$ and of  the fields~$k_i$, respectively. Their proof translates in a verbatim manner to the $p$-adic regulators and the normalized $p$-adic regulators.

We make the heuristic that the $p$-adic regulators of the cyclic subfields of a field of Galois group $(\Z/q\Z)^t$ behave ``independently'' and extend Conjecture 1.1 in~\cite{HofZha16}.
\begin{conjecture}\label{conj:independent reg}
	Let $q=2$ or $3$, $p>q+1$ a prime and $t$ an integer. The probabilities below are among the field $K$ where $p$ is unramified. Then the density of totally real number fields $K$ such that $\Gal(K)=(\Z/q\Z)^t$ for which the normalized $p$-adic regulator is divisible by $p$ is

\noindent (1) $\Prob\left(p\text{ divides }R'_{K,p} : K \text{ real, }\Gal(K)\simeq (\Z/2\Z)^t\right)=1-(1-\frac{1}{p})^{2^t-1}$.

\noindent (2) $\Prob\left(p\text{ divides }R'_{K,p} : \Gal(K)\simeq (\Z/3\Z)^t\right)=1-(1-\mathcal{P})^{\frac{3^t-1}{2}}$, 
		where $$\mathcal{P}=\left\{
		\begin{array}{ll}
		\frac{2}{p}-\frac{1}{p^2}, & \text{ if }p\equiv 1\pmod 3\\
		\frac{1}{p^2}, &\text{ if }p\equiv 1\pmod 3
		\end{array}
		\right.$$ 
If $q=3$, $p\equiv 1\pmod 3$, $t=1$ and the probability concerns the set of fields $K$ which are ramified at $p$, then $\mathcal{P}=\frac{2}{p}$. Overall, if $q=2$ or $3$, with no condition on $K$, $\mathcal{P}\leq \frac{2}{p}$.
\end{conjecture}

We  numerically verified the Conjecture \ref{conj:independent reg} as summarized in Table~\ref{tab:reg 2-3}; the programme can be downloaded from the online  complement~\cite[table6.txt.gz]{OnlineComplement}.

\begin{table}[h!]
	\begin{tabular}{|c|c|c|c|c|}
		\hline
\multirow{2}{*}{$\Gal(K)$} &		\multirow{2}{*}{$p$}   &  experimental & Conj~\ref{conj:independent reg} & relative \\[-8pt] 
\multirow{5}{*}{$\Z/2\Z$}   &		& density  & density & error\\

		\hline
	&	$5$   & $\frac{120037}{607925}\approx 0.20$  &  $0.20$   & $1\%$   \\
		\cline{2-5}
	&	$7$   & $\frac{86702}{607925}\approx 0.14$    & $0.14$    &  $<1\%$ \\
		\cline{2-5}
	&	$11$   & $\frac{54626}{607925}\approx 0.09$    & $0.09$    &  $<1\%$ \\
		\hline
		\hline
\multirow{3}{*}{$(\Z/2\Z)^2$}	&	$5$   & $\frac{13265}{31667}\approx 0.42$  &  $0.49$   & $17\%$   \\
		\cline{2-5}
	&	$7$   & $  \frac{10076}{31667}\approx 0.32$    & $0.37$    &  $14\%$ \\
		\cline{2-5}
	&	$11$   & $\frac{7304}{31667}\approx 0.23$    & $0.25$    &  $7\%$ \\

		\hline
		\hline
\multirow{3}{*}{$(\Z/2\Z)^3$}&	$5$   & $\frac{3931}{5915}\approx 0.67$  &  $0.79$   & $16\%$   \\
		\cline{2-5}
	&	$7$   & $\frac{3191}{5915}\approx 0.54$    & $0.66$    &  $15\%$ \\
		\cline{2-5}
	&	$11$   & $\frac{2417}{5915}\approx 0.41$    & $0.49$    &  $17\%$ \\
		\hline
	\end{tabular}
	\caption{Numerical verification of Conjecture~\ref{conj:independent reg} on the set of fields $K$ such that $\Gal(K)=(\Z/2\Z)^t$, $t=1,2,3$, and conductor $c_K\leq 10^6$ for $t=1$ and $c_K\leq 150000$ for $t=2,3$.}
	\label{tab:reg 2-3}
\end{table}

\subsection{Greenberg's conjecture as a consequence of previous conjectures}\label{ssec:implications}
The Cohen-Lenstra-Martinet heuristic received the attention of many authors and is supported by strong numerical data. Similarly, the Hofmann-Zhang conjecture is backed by the numerical experiments in their paper. In this light, it is interesting to note that these two conjectures imply Greenberg's $p$-rationality conjecture. 

\begin{theorem}\label{th:point 3 of main} Let $t$ be an integer, $q=2$ or $3$ and $p$ a prime such that  $p>4\frac{q^t-1}{q-1}$. Under Conjecture~\ref{conj:independent reg} and Conjecture~\ref{conj:reformulation Martinet}, there exist infinitely many $p$-rational number fields of Galois group $(\Z/q\Z)^t$, or equivalently $\GC_\infty((\Z/2\Z)^t,p)$ and $\GC_\infty((\Z/3\Z)^t,p)$ hold. 
\end{theorem}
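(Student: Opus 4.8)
The plan is to run the argument of Theorem~\ref{th:case t=1} one level up, replacing the single-field densities by their $(\Z/q\Z)^t$-analogues. First I would restrict attention to \emph{totally real} fields: for $q=3$ every field with Galois group $(\Z/3\Z)^t$ is automatically totally real, and for $q=2$ it suffices to look at $K=\Q(\sqrt{d_1},\dots,\sqrt{d_t})$ with all $d_i>0$; this restriction is only needed so that Conjecture~\ref{conj:independent reg}(1) applies verbatim. Since $p>5q^t\ge 10$ we have $p\ge 11$, hence $p\ge 5$, $p\neq q$ (so $p\nmid[K:\Q]=q^t$), and no degree-$q$ subfield of $K$ can contain $\zeta_p$ (that would force $p-1\mid q\le 3$). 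Thus Proposition~\ref{prop:subfields} applies: $K$ is $p$-rational as soon as each of its $c:=\frac{q^t-1}{q-1}$ cyclic subfields $F$ has $p\nmid h_F$ and no $p$-primary unit. Two further reductions streamline the bookkeeping: by Kuroda's formula (Lemma~\ref{lem:Kuroda}) together with $p\neq q$, the condition ``$p\nmid h_F$ for every cyclic subfield $F$'' is equivalent to the single condition ``$p\nmid h_K$'', and (as recalled in Section~\ref{sub:valuation}) if $F$ has a $p$-primary unit then $p\mid R'_{F,p}$.

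Next comes the density estimate. Ordering the totally real fields of Galois group $(\Z/q\Z)^t$ by discriminant, the previous paragraph yields the inclusion of events
\begin{equation*}
\{K\text{ not }p\text{-rational}\}\ \subseteq\ \{p\mid h_K\}\ \cup\ \{\exists\,F\subset K,\ p\mid R'_{F,p}\},
\end{equation*}
so the upper density of the left-hand family is at most $\delta_h+\delta_R$, where $\delta_h$ and $\delta_R$ are the densities of the two right-hand families, which exist by Conjecture~\ref{conj:reformulation Martinet} and Conjecture~\ref{conj:independent reg} respectively. Plugging in the conjectural formulas gives $\delta_h=1-\bigl(\tfrac{(p)_\infty}{(p)_1}\bigr)^{\kappa}$ (with $\kappa=2^t-1$ for $q=2$, $\kappa=3^t-1$ for $q=3$ and $p\equiv1\ (3)$, and $\delta_h=1-\bigl(\tfrac{(p^2)_\infty}{(p^2)_1}\bigr)^{c}$ for $q=3$ and $p\equiv2\ (3)$) and $\delta_R=1-(1-\mathcal P)^{c}$ with $\mathcal P\in\{\tfrac1p,\ \tfrac2p-\tfrac1{p^2},\ \tfrac1{p^2}\}$ according to $q$ and $p\bmod 3$. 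In every case both exponents satisfy $\kappa,c<q^t$ and $\mathcal P\le\tfrac2p$, so it remains only to verify $\delta_h+\delta_R<1$, i.e.
\begin{equation*}
\Bigl(\tfrac{(p)_\infty}{(p)_1}\Bigr)^{\kappa}+(1-\mathcal P)^{c}>1 .
\end{equation*}
Using $\tfrac{(p)_\infty}{(p)_1}=\prod_{k\ge2}(1-p^{-k})\ge 1-\tfrac1{p(p-1)}$ (and the sharper analogue for $(p^2)$), the elementary bound $(1-x)^n\ge 1-nx$, and $\kappa,c<q^t$, the left-hand side is at least $2-\tfrac{q^t}{p(p-1)}-\tfrac{2q^t}{p}$, which exceeds $1$ precisely when $q^t(2p-1)<p(p-1)$; this holds with room to spare once $p>5q^t$ (indeed $p>2q^t$ already suffices).

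Finally, since there are infinitely many totally real fields of Galois group $(\Z/q\Z)^t$ — for instance with conductor $p_1\cdots p_t$, $p_i\equiv 1\pmod q$ distinct, by Lemma~\ref{lemma:conductor} and Dirichlet — an upper density $<1$ of non-$p$-rational fields forces a strictly positive lower density, hence infinitely many, of $p$-rational ones; this is exactly $\GC_\infty((\Z/q\Z)^t,p)$. The main delicacy I anticipate is purely organizational: correctly matching the exponents $\kappa$ and $c$ produced by the two conjectures across the three cases ($q=2$; $q=3$ with $p\equiv1$; $q=3$ with $p\equiv2\ (3)$), and making sure the two conjectural densities are read over the same ordered family — which is why one works inside the totally real subfamily and why it is worth noting that the Cohen--Lenstra--Martinet density in Conjecture~\ref{conj:reformulation Martinet} is insensitive to the signature. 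Once the cases are separated, each resulting numerical inequality is immediate from $p>5q^t$.
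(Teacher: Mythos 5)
Your proposal is correct and follows essentially the same route as the paper: restrict to totally real fields, reduce $p$-rationality to the cyclic subfields via Proposition~\ref{prop:subfields}, bound the upper density of non-$p$-rational fields by the sum of the two conjectural densities from Conjecture~\ref{conj:reformulation Martinet} and Conjecture~\ref{conj:independent reg}, and check via $(1-x)^n\ge 1-nx$ that this sum is below $1$ when $p>5q^t$. Your version merely makes explicit some steps the paper leaves implicit (the Kuroda reduction from subfield class numbers to $h_K$, the case split by $q$ and $p\bmod 3$) and observes that $p>2q^t$ already suffices, which is consistent with the paper's remark that the bound $p>5q^t$ is artificial.
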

\begin{proof}
	Let $\Kappa(D)$ denote the set of totally real number fields of Galois group $(\Z/q\Z)^t$ of conductor less than $D$. Then we have 
\begin{center}
	\begin{align*}
		\limsup_{D\rightarrow \infty} \frac{\#\{K\in \Kappa(D)\text{ non $p$-rational})}{\#\Kappa(D)}&\leq &
		\limsup_{D\rightarrow \infty} \frac{\#\{K\in \Kappa(D)\text{ : }p\mid  h_KR'_{K,p}\}}{\#\Kappa(D)}\\
		&\leq& \Prob(p\mid h_K)+\Prob( p\mid R'_{K,p}). 
		\end{align*}
 		\end{center}
Under Conjecture~\ref{conj:independent reg} we have $$\Prob(p\mid R'_{K,p})=\prod_{k\text{ cyclic subfield}}\Prob(p\mid R'_{k,p}),$$ which is upper bounded by $\frac{q^t-1}{q-1}\mathcal{P}$ where $$\mathcal{P}:=\Prob(p\mid R'_{k,p}\text{ : $k$ is real and }\Gal(k)=\Z/q\Z).$$ Since in each case of the conjecture $\mathcal{P}\leq \frac{2}{p},$ we have  $$\Prob(p\mid R'_{K,p})\leq \frac{q^t-1}{q-1}\cdot \frac{2}{p}.$$  

Under Conjecture~\ref{conj:reformulation Martinet} we have $$\Prob(p\mid h_K)= \prod_{k\text{ cyclic subfield}}\Prob(p\mid h_k),$$ which is upper bounded by $\frac{q^t-1}{q-1}\mathcal{D}$ where $$\mathcal{D}:= \Prob(p\mid h_k\text{ : $k$ is real and }\Gal(k)=\Z/q\Z).$$ In each case of the conjecture we have $\mathcal{D}\leq 1-\prod_{k=2}^\infty(1-\frac{1}{p^k})$ which is upper bounded by $\prod_{k=2}^\infty (\sum_{i=0}^\infty \frac{1}{p^{ki}})-1=\sum_{j=1}^\infty \frac{n(j)}{p^j}$ where $n(j)$ is the number of partitions of $j$ as sums of distinct integers larger than $1$. Since $n(j)\leq 2^j$ we obtain that $\mathcal{D}\leq \sum_{j=2}^\infty(\frac{2}{p})^j\le \frac{8}{p^2}$. Putting all together we obtain
		\begin{align*}			\limsup_{D\rightarrow \infty} \frac{\#\{K\in \Kappa(D)\text{ non $p$-rational})}{\#\Kappa(D)}
		 &\leq & \frac{q^t-1}{q-1}(\frac{2}{p}+\frac{8}{p^2})\\
		 &\leq & \frac{q^t-1}{q-1}\frac{4}{p}<1.
	\end{align*}
\end{proof}

 To conclude this section, we note that Pitoun and Varescon~\cite[Sec. 5]{Pitoun} brought numerical data on the density of $p$-rational quadratic fields. 
 
\section{Algorithmic tools}\label{sec:algorithmic}
Let us make a summary of the algorithms used in the computations of the previous section. The main algorithmic tool in the study of $p$-rational fields is the algorithm of Pitoun and Varescon~\cite{Pitoun} to test $p$-rationality. Their algorithm is not restricted to abelian fields and allows to easily obtain examples of non-abelian $p$-rational fields; in Table~\ref{tab:non-abelian} we list quartic number fields obtained with our implementation of the algorithm~\cite{OnlineComplement}. Since it requires to compute the ray class group, this algorithm is at least as costly as computing the class number. We discuss the complexity of class number algorithms below and conclude that they are computationally expensive. Therefore, in this section we present algorithms which apply to a partial set of number fields but could be much faster in practice. Hence we develop a strategy to decide whether the number fields in a given list are $p$-rational by making as little as possible use of the  complete $p$-rationality test of~Pitoun and Varescon.

 \begin{table}[h!]
		$
		\begin{array}{c|c|c}
		\hline
		\begin{array}{c}
		\text{Galois}\\[-8pt]
		\text{group}
		\end{array}
		& \forall p\leq 100,\text{ }p-\text{rational} &\text{ non $7$-rational}\\
		\hline
		\Z/4\Z     &x^4+x^3+x^2+x+1        &   x^4-23x^3-6x^2+23x+1         \\
		\hline
		V_4          &  x^4-x^2+1            &   x^4+10x^2+1                  \\
		\hline
		D_4          &  x^4-3            &     x^4-6                          \\
		\hline
		A_4          &  x^4+8x+12       &    x^4 - x^3 - 16x^2 - 7x + 27    \\
		\hline
		S_4          &  x^4+x+1          &      x^4+35x+1                     \\
		\hline
		
		\end{array}
		$
		\label{tab:non-abelian}
		\caption{Examples of $p$-rational quartic fiels for each possible Galois group and each prime $p\leq 100$.}
	\end{table}

\subsection{Enumerating all the groups of conductor up to $X$ and $\Gal(K)= (\Z/3\Z)^t$ for some $t$}

Density computations require to list all the number fields up to isomorphism having a given abelian Galois group. Thanks to the conductor-discriminant formula~\cite[Thm 3.11]{Washington}, for any  prime $q$ and integer $t$,  if $\Gal(K)\simeq (\Z/q\Z)^t$ then the conductor of $K$ is $c_K=D_K^\frac{1}{(q-1)q^{t-1}}$. 

For instance, to enumerate the  number fields of Galois group $\Z/3\Z\times \Z/3\Z$ of discriminant less than $X$, we consider the fields $\Q(\zeta_c)$ for each $c\leq X^{1/6}$ such that $c$ is  product of $3$ with exponent $0$ or $2$ and of a set of distinct primes congruent to $1$ modulo $3$ with exponent $1$. For each subgroup $H$ of $(\Z/c\Z)^*$  such that $(\Z/c\Z)^*/H\simeq \Z/3\Z\times\Z/3\Z$, we
compute the fixed field of $H$. 

Cyclic cubic fields of conductor $c$ (and discriminant $c^2$) are obtained by direct formulae in terms of the integer solutions of the equation $u^2+27v^2=4c$ (cf~\cite[Th 6.4.6]{Cohen}). Note that one cannot use the classical parametrization  $P_a(x)=x^3-ax^2-(a+3)x-1$ of the fields of Galois group $\Z/3\Z$ as small conductors can correspond to fields $\Q[x]/P_a(x)$ of large parameters $a$, e.g. the parameters corresponding to the conductors $c_1=6181$ and $c_2=4971871639$ are actually nearly equal:  $a_1=70509$ and $a_2=70510$.    

\subsection{Testing if $p$ divides $h_K$}
 In the context of the Cohen-Lenstra-Martinet heuristic, one has to test if $h_K$ is divisible by~$p$. It is remarkable that for fields of fixed given degree there is no algorithm to compute class numbers faster than computing the value of~$h_K$. Indeed, Buchmann's algorithm~\cite[Algorithm~6.5.9]{Cohen} to compute $h_K$ has an unconditional complexity $O(\sqrt{D_K)}$ and a conjectural complexity $L(D_K)^c$ for a constant $c$, where  $L(X):=\exp(\sqrt{\log X}\sqrt{\log\log X})$.

A second approach due to Fieker and Zhang~\cite{FiekerZhang} tests the divisibility of $h_K$ by $p$ using the $p$-adic class number formula in time $O(D_K^{1/(n_K-1)})$. This is $O(\sqrt{D_K})$ for cyclic cubic fields, which is equal to the proven upper bound on the complexity of Buchmann's algorithm, but slower than the conjectural complexity.   
  
 A third approach is that of Marie-Nicole Gras~\cite{Gras}, which was improved in~\cite{vanderLinden} and~\cite[Eq (5.1)]{Hakkarainen}, and was used to compute the $p$-class group in~\cite{AokiFukuda}.  Based on a result of Hasse, these algorithms compute cyclotomic units (see~\cite[Ch 8]{Washington}) and have a complexity $O(c_K)$ (according to Schwarz's thesis, see~\cite{Hakkarainen}). Due to the conductor-discriminant formula, for cyclic cubic fields this is once again $O(\sqrt{D_K})$. Hence, the cyclotomic unit methods have a complexity which is exponential in $\log D_K$ and therefore larger than the conjectural complexity of Buchmann's algorithm. 
 Obviously, the cost of computations is at least greater than the cost of the binary size of the cyclotomic units, which we compute in the following result.
 \begin{lemma}
 Let $K$ belong to an infinite family of cyclic cubic fields. Let $\sigma\neq \id$ be an automorphism and $u$ a unit such that $\{u,\sigma(u)\}$ generates the group of cyclotomic units $C$ of norm $1$. We identify $u$ with one of its two embeddings in $\R$. Then we have 
 $$\max(|\log |u||,|\log|\sigma(u)||)=D_K^{\frac{1}{4}+o(1)},$$
 where $o(1)$ is a function which tends to zero when $D_K$ tends to infinity. 
 \end{lemma}
 \begin{proof}
Let $\varepsilon$ be a generator of $E$ which is the group of units of $K$  of norm $1$ seen as a $\Z[\zeta_3]$-module (cf. \cite[Sec. 2]{Gras}). Since $\Z[\zeta_3]$ is a P.I.D., there exists $\omega\in \Z[\zeta_3]$ such that $u=\varepsilon^\omega$ and therefore $[E:C]=\Norm_{\Q(\zeta_3)/\Q}(\omega)$ (see Proposition $1$ and the paragraph following it of \cite{Gras}). Here $C$ is the group of cyclotomic units of $K$ of norm $1$. By Hasse's theorem (see~\cite{Gras}), $[E:C]=h_K$, so 
$$\left|\begin{array}{cc} \log |u| & \log |\sigma(u)| \\ \log |\sigma(u)| & \log |\sigma^2(u)| \end{array}    \right|=  \pm R_Kh_K.$$
By the Brauer-Siegel theorem~\cite[Th. 4.9.15]{Cohen} we have $h_KR_K= D_K^{1/2+o(1)}$. The determinant above equals $-(a^2+ab+b^2)$ where $a=\log |u|$ and $b=\log |\sigma(u)|$. Since $\frac{3}{4}\max(|a|,|b|)^2\leq a^2+ab+b^2\leq 3 \max(|a|,|b|)^2$, we obtain the desired result.
 \end{proof}
 Note that in \cite[Sec. 5.8.3]{Cohen}, units are represented in a shorter manner than $|\log |u||,|\log|\sigma(u)||$. However it is not known how to represent cyclotomic units as a product of a number of factors which is polynomial in~$\log D_K$. Also, note that there exist effective lower bounds on the residues at $1$ of the $L$ functions (see for example Louboutin's works), which replace the Brauer-Siegel theorem and imply effective lower bounds on $|\log |u||+|\log \sigma|(u)||$.

 \subsection{Testing the existence of $p$-primary units}
 Schirokauer~\cite{Sch93} proposed a fast method to test if $K$ contains $p$-primary units. In this section $p$ is unramified in $K$. With $K_p$ and $\OO_p$ as in page $3$, let $\lambda:\OO_K\bigcap K_p\rightarrow \OO_p/p\OO_p\simeq \OO_K/p\OO_K$, $x\mapsto \log_p(x)/p\bmod p\OO_p$. Given a basis $(\omega_i)_{1\leq i\leq n_K}$ of an order of $\OO_K$, one can write $\lambda=\sum_i \lambda_i\omega_i$ where $\lambda_i$ are maps into $\F_p$; we call them Schirokauer maps. Note that if $f$ is a monic polynomial and $\alpha$ is a root of $f$ in its number field, then $\Z[\alpha]$ is an order of~$\OO_K$.

\begin{lemma}\label{lemma_reg}
 Let $p$ be an odd unramified prime in the number field $K$. Let $r$ be the unit rank of $K$ and let $U=\{u_1,\ldots,u_r\}$ be a set of units. Assume that $\lambda_1,\ldots,\lambda_n$ are Schirokauer maps corresponding to a basis of the maximal order. We set $M(U):=(\lambda_i(u_j))_{i,j}$.
 
 (1) If $U$ is a system of fundamental units then $K$ has no $p$-primary units if and only if $\rank M(U)=r$. 

(2) If $U$ is an arbitrary set of $r$ units and $\rank M(U)=r$ then $K$ has no $p$-primary units. 
\end{lemma}

\begin{proof}
 (1) Since $p$ is odd and unramified, an element $x\in K$ is a $p$-th power if and only if $\log_p(x)\in p^2\OO_p$. 
 This is equivalent to $\lambda(x)=0$ and also to  $\lambda_1(x)=\ldots=\lambda_{n_K}(x)=0$. The existence of $p$-primary units is hence equivalent to $\ker M(U)\neq 0$ and to $\rank M(U)\neq r$.
 
 (2)  If $\mathcal{E}=(\varepsilon_j)_{j=1,\ldots,r}$ is a system of fundamental units and $\Omega$ is the matrix such that, for each $i$, $u_i=\prod_{j=1}^r \varepsilon_j^{\Omega_{i,j}}$, then $M(U)=\Omega\cdot  M(\mathcal{E})$ so $\rank M(\mathcal{E})\geq \rank M(U)=r$.  \end{proof}

\subsubsection{Fast computation of a unit in cyclic cubic fields}
The remaining question is that of computing a system of generators for $E_K/E_K^p$. In  the  case of cyclic cubic fields the best known method is Buchmann's algorithm~\cite[Alg. 6.5.9]{Cohen}, which has a high cost as discussed in the previous section. We propose a new algorithm to compute units which, although does not work in all the cases, allows us to reduce the total time of the computations when tackling millions of fields. 

\begin{lemma} \label{unit lemma}
	Let $K$ be a number field such that $\Gal(K)\simeq \Z/q\Z$ for an odd prime~$q$. Let $\ell$ be a prime factor of the conductor $c_K$ of $K$ such that $\ell\neq q$. Then the following assertions hold:
\begin{enumerate}
\item there exists an ideal
$\gl$ of $K$
such that $\gl^q=\ell\OO_K$;
\item if $\gl$ is principal, for any generator $\omega\in \OO_K$ of $\gl$ and any generator $\sigma$ of~$\Gal(K/\Q)$, $\frac{\sigma(\omega)}{\omega}$ is a unit.
\end{enumerate}
\end{lemma}
\begin{proof}
(i) Since $\ell$ is
ramified in the Galois field $K$, we have $\ell \OO_K=\gl^e$ for some divisor $e\neq 1$ of $\deg K$. But $\deg K=q$ is prime, so $\ell=\gl^q$.

(ii) The ideal generated by $\frac{\sigma(\omega)}{\omega}$ is $\sigma(\gl)\gl^{-1}$. Since $\sigma\in \Gal(K)$, $\sigma(\gl)$ is a prime ideal above $\ell$. But $\ell$ is totally ramified in $K$, so $\sigma(\gl)=\gl$ and therefore  $\frac{\sigma(\omega)}{\omega}$ is a unit.
\end{proof}

Algorithm~\ref{algo:fast unit} is a direct consequence of this lemma; a sage implementation (\textbf{https://sagemath.org}) is available in the online complement~\cite[algorithm2.sage]{OnlineComplement}.

\begin{algorithm}Fast computation of unit in cyclic cubic number fields.
	\label{algo:fast unit}
	\begin{algorithmic}
		\Require  a cyclic cubic field $K$ and a factorization of its conductor $m$
		\Ensure  a unit of $K$
		\For{$\ell\equiv 1\mod q$ factor of $m$ }  
		\State factor $\ell$ in $\OO_K$ to obtain $\gl$ using \cite[Sec 4.8.2]{Cohen} 
		\State search a generator $\omega_\ell$ of the ideal $\gl$ using LLL~\cite[Alg. 2.6.3]{Cohen}. 
		\EndFor 
		\State \Return a product of the units $\eta_\ell:=\sigma(\omega_\ell)/\omega_\ell$ 
	\end{algorithmic}
\end{algorithm}

We tested Algorithm \ref{algo:fast unit} on  $630$ cyclic cubic number fields listed in Table 1 of~\cite{Gras}, having conductor between $1$ and $4000$. Among them for $272$ fields, (i.e. $43.1 \%$ of $630$ fields), $\gl$ is principal and Algorithm~\ref{algo:fast unit} succeeds. One such example is the field obtained by defining polynomial $x^3+x^2-2x-1$.
Here we write that $\gl$ is principal when there exists a prime factor $\ell$ of the conductor $m$ of the number field $K$ such that $\gl$ is principal.

\section{Conclusion and open questions}
Greenberg's $p$-rationality conjecture for multiquadratic fields and its extension to multicubic fields is suppoted by extensive numerical data and is a consequence of existing conjectures of Cohen-Lenstra-Martinet and Hofmann-Zhang. 

We exhibited an infinite family of cyclic cubic fields without $p$-primary units for a set of primes analoguous to the Wieferich primes. It is an open question to decide if this family has an infinite subset of $p$-rational fields. 

Although we limited our study to abelia fields, one can extend the problem of finding $p$-rational fields to the case of any Galois group. 

Finally, the algorithmic tools for multiquadratic fields which are listed and improved in this work are not restricted to the applications shown in this work. For example, the cyclotomic units computations in multiquadratic fields play an important role in the analysis of the lattice-based cryptography~\cite{Bernstein2017}.

\bibliographystyle{plain}
\bibliography{p_rational_field}

\vspace{2cm}
\end{document}